\documentclass[hidelinks, 11pt, reqno]{amsart}

\usepackage{amsmath, amssymb, amsthm, xcolor, xspace, hyperref, enumerate}

\setlength{\textwidth}{6.5in}
\setlength{\oddsidemargin}{0in}
\setlength{\evensidemargin}{0in}
\setlength{\textheight}{9.35in}
\setlength{\topmargin}{-0.35in}

\newtheorem{thm}{Theorem}
\newtheorem{lem}[thm]{Lemma}

\title{On Viazovska's modular form inequalities}

\author{Dan Romik}
\address{Department of Mathematics \\ University of California, Davis \\ One Shields Ave \\ Davis CA 95616}
\email{romik@math.ucdavis.edu}
\subjclass[2000]{11F11, 26D07}
\keywords{Jacobi thetanull function, Eisenstein series, modular form, inequality, sphere packing}

\begin{document}

\maketitle

\vspace{-10pt}
\begin{abstract}
Viazovska proved that the $E_8$ lattice sphere packing is the densest sphere packing in $8$ dimensions. Her proof relies on two inequalities between functions defined in terms of modular and quasimodular forms. We give a direct proof of these inequalities that does not rely on computer calculations.
\end{abstract}

\section{Introduction}

Viazovska \cite{viazovska} proved that the sphere packing associated with the $E_8$ lattice, which has a packing density of $\frac{\pi^4}{384}$, is the densest sphere packing in $8$ dimensions. Her proof relied on properties of certain functions, denoted $\phi_0(z)$ and $\psi_I(z)$, which were defined in terms of classical modular and quasimodular forms: the Eisenstein series $E_2$, $E_4$ and $E_6$, and the Jacobi thetanull functions $\theta_2$, $\theta_3$ and $\theta_4$. A key step in the proof consisted of showing that these functions satisfied a certain pair of inequalities; this was essential to verifying that a radial function defined by taking an integral transform of $\phi_0(z)$ and $\psi_I(z)$ (combined in a particular way) was the so-called \emph{magic function} that had been conjectured to exist by Cohn and Elkies \cite{cohn-elkies}
and certifies the correct sphere packing bound.

The goal of this paper is to give a new and direct proof of Viazovska's inequalities.
To recall the result, let $z$ denote a complex variable taking values in the upper half plane, and denote $q=e^{\pi i z}$. Let $\sigma_\alpha(n) = \sum_{d\,|\,n} d^\alpha$ denote the divisor function. Recall the definitions of the functions $E_2$, $E_4$, $E_6$, $\theta_2$, $\theta_3$ and $\theta_4$:
\begin{equation*}
\begin{array}{rclcrcl}
\displaystyle E_2(z)  \hspace{-6pt}& = &\hspace{-6pt} \displaystyle 1 - 24 \sum_{n=1}^\infty \sigma_1(n) q^{2n},
& & \theta_2(z) \hspace{-6pt}& = &\hspace{-6pt} \displaystyle \sum_{n=-\infty}^\infty q^{(n+1/2)^2},
\\[12pt]
\displaystyle E_4(z) \hspace{-6pt}& = &\hspace{-6pt} \displaystyle 1 + 240 \sum_{n=1}^\infty \sigma_3(n) q^{2n},
& & \theta_3(z) \hspace{-6pt}& = &\hspace{-6pt} \displaystyle  \sum_{n=-\infty}^\infty q^{n^2},
\\[12pt]
\displaystyle E_6(z) \hspace{-6pt}& = &\hspace{-6pt} \displaystyle 1 - 504 \sum_{n=1}^\infty \sigma_5(n) q^{2n},
& & \theta_4(z) \hspace{-6pt}& = &\hspace{-6pt} \displaystyle \sum_{n=-\infty}^\infty (-1)^n q^{n^2}.
\end{array}
\end{equation*}
Next, set
\begin{align}
\phi_0(z) & = 1728 \frac{(E_2(z) E_4(z) - E_6(z))^2}{E_4(z)^3 - E_6(z)^2},
\label{eq:def-phi0}
\\
\psi_I(z) & = 128 \left( \frac{\theta_3(z)^4 + \theta_4(z)^4}{\theta_2(z)^8} + 
\frac{\theta_4(z)^4 - \theta_2(z)^4}{\theta_3(z)^8} \right),
\label{eq:def-psiI}
\end{align}
and define functions $A(t)$, $B(t)$ of a real variable $t>0$ by
\begin{align*}
A(t) & = -t^2 \phi_0(i/t) - \frac{36}{\pi^2} \psi_I(it), 
\\
B(t) & = -t^2 \phi_0(i/t) + \frac{36}{\pi^2} \psi_I(it).
\end{align*}

\begin{thm}[Viazovska's modular form inequalities]
\label{thm:viaz-ineqs}
The functions $A(t)$, $B(t)$ satisfy
\begin{align}
\label{viaz-ineq1} \tag{V1}
A(t) < 0 \qquad (t>0),
\\
\label{viaz-ineq2} \tag{V2}
B(t) > 0  \qquad (t>0).
\end{align}
\end{thm}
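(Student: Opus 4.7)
\medskip
\noindent\textbf{Proof plan.}
The plan is to handle the two inequalities separately, since (V1) is much easier than (V2).

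For (V1), I would first show that $\phi_0(z)>0$ on the imaginary axis: since $\phi_0 = 1728 P^2/(E_4^3 - E_6^2)$ with $P := E_2 E_4 - E_6 = \tfrac{3}{2}\,q\,\partial_q E_4 = 720 q^2 + 12960 q^4 + \cdots$, both numerator and denominator are strictly positive for $z = it$, $t > 0$. Next, I would prove $\psi_I(it) > 0$ by using Jacobi's identity $\theta_3^4 = \theta_2^4 + \theta_4^4$ to rewrite
\begin{equation*}
\psi_I(z) = 128\left[\frac{1}{\theta_2^4} - \frac{1}{\theta_3^4} + 2\theta_4^4\left(\frac{1}{\theta_2^8} + \frac{1}{\theta_3^8}\right)\right].
\end{equation*}
On $z = it$, all thetanulls are positive, and $\theta_3^4 - \theta_2^4 = \theta_4^4 > 0$ forces $\theta_3 > \theta_2$, so each bracketed term is positive, giving $\psi_I(it) > 0$. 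Then $A(t)$ is the sum of the two strictly negative quantities $-t^2 \phi_0(i/t)$ and $-(36/\pi^2)\psi_I(it)$, which proves (V1).

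For (V2) one must show $(36/\pi^2) \psi_I(it) > t^2 \phi_0(i/t)$. Using the transformations $E_4(-1/z) = z^4 E_4(z)$, $E_6(-1/z) = z^6 E_6(z)$ and $E_2(-1/z) = z^2 E_2(z) - 6iz/\pi$, a short computation gives
\begin{equation*}
t^2 \phi_0(i/t) = 1728\,\frac{\bigl(t P(it) - (6/\pi) E_4(it)\bigr)^2}{E_4(it)^3 - E_6(it)^2},
\end{equation*}
so both sides of (V2) now live at the single argument $z = it$. Expanding in $q = e^{-\pi t}$ reveals that the leading singular terms cancel exactly: both $(36/\pi^2)\psi_I(it)$ and $t^2\phi_0(i/t)$ equal $36/(\pi^2 q^2) + O(1)$ as $t \to \infty$. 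At the dual cusp $t \to 0^+$, both sides vanish, with $t^2\phi_0(i/t)$ exponentially smaller than $(36/\pi^2)\psi_I(it)$ by the $z\mapsto -1/z$ transformation; so $B(t) > 0$ at both cusps, and the real challenge is the interior.

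My strategy for (V2) is: (i)~combine the theta and Eisenstein contributions into a single closed-form expression for $B(t)$, using identities such as $E_4 = \tfrac{1}{2}(\theta_2^8 + \theta_3^8 + \theta_4^8)$ and $(\theta_2\theta_3\theta_4)^8 = \tfrac{4}{27}(E_4^3 - E_6^2)$; (ii)~produce a $q$-expansion of $B(t)$ whose coefficients can be shown to be nonnegative for $t \geq 1$; (iii)~handle $0 < t \leq 1$ by an analogous expansion in $q' = e^{-\pi/t}$ obtained from the $z \mapsto -1/z$ transformation. The main obstacle will be step~(ii): because of the delicate leading-order cancellation, no naive coefficient comparison works, and the crux is to discover a closed-form identity---a sum-of-squares decomposition, a convex-combination expression, or an integral representation with a positive kernel---that makes the positivity of $B(t)$ transparent without recourse to numerical verification.
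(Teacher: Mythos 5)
Your proof of (V1) is correct and essentially matches the paper's. You use Jacobi's identity $\theta_3^4 = \theta_2^4 + \theta_4^4$ to exhibit $\psi_I(it)$ as a sum of positive terms, and you use the product/derivative formulas for $E_4^3 - E_6^2$ and $E_2E_4 - E_6$ to show $\phi_0(it) > 0$; the paper's version of the $\psi_I$ argument writes things in terms of the modular lambda $\lambda = \theta_2^4/\theta_3^4$ and gets the factor $(1-\lambda)(2+\lambda+2\lambda^2)/\lambda^2$, but this is the same underlying observation.

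For (V2), however, you have a plan but not a proof, and the plan as stated has a gap exactly where you predict it will. You propose to clear denominators, expand in $q$, and show the resulting coefficients are nonnegative for $t \geq 1$. This cannot work: after clearing denominators the difference $B$-numerator has a $q$-expansion whose coefficients alternate in sign (the $q^3$ coefficient of the $\psi_I$-piece is $-10240$, for instance), and the Eisenstein piece has coefficients that are genuine quadratic polynomials in $t$ with mixed signs. You correctly flag that "no naive coefficient comparison works" and that some sum-of-squares or positive-kernel decomposition would be needed, but you do not produce one, and the paper's proof does not go that route either.

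The paper's actual mechanism for (V2) is different from anything in your sketch, and is worth knowing. After using $z\mapsto -1/z$ to reduce to two inequalities on $t\geq 1$ (which you do anticipate), and clearing denominators against $E_4^3 - E_6^2 = \tfrac{27}{4}(\theta_2\theta_3\theta_4)^8$, the paper compares the two sides not term-by-term but via \emph{monotonicity plus evaluation at the single CM point $z=i$}. Concretely: on the easier side, $e^{3\pi t}f(it)$ is nonincreasing because the Fourier coefficients of $(E_2E_4-E_6)^2$ are nonnegative, so $e^{3\pi t}f(it) \leq e^{3\pi}f(i)$, which is computed exactly using $E_2(i)=3/\pi$, $E_4(i) = 3\Gamma(1/4)^8/(64\pi^6)$, $E_6(i)=0$; this numerical bound ($\approx 13130$) is dominated by the constant $20480$ coming from the leading Fourier coefficient of the $\psi_I$-side. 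The harder side requires (a) an alternating-sign lemma for the Fourier coefficients of the transformed $\psi_I$-piece (proved by rewriting $\widetilde g(z+1)$ as a manifestly nonnegative polynomial in $\theta_2^4$ and $\theta_3^4+\theta_4^4$), and (b) a strategic three-way decomposition of the quasimodular $\phi_0$-piece into one piece that is elementary and monotone, one piece whose $q$-series with polynomial-in-$t$ coefficients is monotone for $t\geq 1$ after a two-term cancellation, and one piece that is simply nonnegative. Both sides are then evaluated at $t=1$ via the CM values to get $288 < 468$. None of these three ideas — the monotonicity argument, the alternating-sign lemma, or the explicit CM evaluation at $z=i$ — appears in your plan, and without at least the first and third of them your approach has no way to control the "interior" region you correctly identify as the real difficulty.
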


Viazovska's original proof of Theorem~\ref{thm:viaz-ineqs} relied heavily on computer calculations. The proof consisted of two main steps: first, analogues of the inequalities \eqref{viaz-ineq1}--\eqref{viaz-ineq2} were verified numerically for approximating functions $A_0^{(6)}(t)$, $A_\infty^{(6)}(t)$, $B_0^{(6)}(t)$, $B_\infty^{(6)}(t)$ of $A(t)$ and $B(t)$, which were formed by truncating the asymptotic expansions of $A(t)$ and $B(t)$ near $t=0$ and $t=\infty$; this could be done in a finite calculation. Second, rigorous bounds were derived that made it possible to deduce the inequalities \eqref{viaz-ineq1}--\eqref{viaz-ineq2} from the corresponding inequalities for the approximating functions.

Another pair of inequalities of similar flavor to \eqref{viaz-ineq1}--\eqref{viaz-ineq2} was proved by Cohn et al \cite{cohnetal24} in their subsequent proof of optimality of the Leech lattice packing in 24 dimensions. Their proof used different techniques, but that proof as well remained dependent on extensive computer calculations.

Below, we give a new proof of Theorem~\ref{thm:viaz-ineqs} that is fully human-verifiable and requires no numerical calculations beyond the elementary manipulation of a few standard mathematical constants. This helps to simplify and demystify a critical step in Viazovska's celebrated sphere packing proof.

\section{Proof of \eqref{viaz-ineq1}}

It is sufficient to prove that $\phi_0(it) > 0$ and $\psi_I(it) > 0$ for all $t>0$. 
The first of these claims follows immediately from the standard identities \cite[pp.~20, 21, 49]{zagier}
\begin{align}
\label{eq:moddisc-prod}
E_4^3 - E_6^2 &= 1728 q^2 \prod_{n=1}^\infty (1-q^{2n})^{24},
\\
\label{eq:e2e4minuse6}
E_2 E_4 - E_6
 & = \frac{3}{2\pi i} \frac{d E_4}{dz} = 720 \sum_{n=1}^\infty n \sigma_3(n) q^{2n},
\end{align}
which imply that both $E_4^3 - E_6^2$ and $E_2 E_4 - E_6$ take positive real values on the positive imaginary  axis.

For the claim about $\psi_I(it)$, recall 
Jacobi's identity $\theta_2^4+\theta_4^4 = \theta_3^4$ (see \cite[p.~28]{zagier}), and set $\lambda(z) = \theta_2^4/\theta_3^4 = 1-\theta_4^4/\theta_3^4$ (the modular lambda function \cite[p.~63]{zagier}). It is clear from these defining relations of $\lambda(z)$ that for $t>0$, $\lambda(it)$ takes real values in $(0,1)$.
Now note that
\begin{align*}
\frac{1}{128} \psi_I
& =
\frac{\theta_3^4 + \theta_4^4}{\theta_2^8} + 
\frac{\theta_4^4 - \theta_2^4}{\theta_3^8}
= 
\frac{1}{\theta_3^4} \cdot \frac{ \theta_3^8 + \theta_3^4 \theta_4^4}{\theta_2^8} + 
\frac{1}{\theta_3^4} \cdot \frac{\theta_4^4 - \theta_2^4}{\theta_3^4}
\\ & =
\frac{1}{\theta_3^4} \left( \frac{1}{\lambda^2} + \frac{1}{\lambda} \cdot \frac{1-\lambda}{\lambda}
+ (1-\lambda) - \lambda
\right)
= 
\frac{1}{\theta_3^4} \frac{(1-\lambda)(2 + \lambda + 2\lambda^2)}{\lambda^2}.
\end{align*}
Since the function $x\mapsto \frac{(1-x)(2+x+2x^2)}{x^2}$ is positive for $x \in (0,1)$, and since $\theta_3(it)^4 > 0$ for $t>0$, we get the claim that $\psi_I(i t) > 0$.
\qed

\section{Proof of \eqref{viaz-ineq2}}

We will make use of the standard modular transformation properties \cite[pp.~996--997]{viazovska}
\begin{align}
\label{eq:transrel-theta2}
\theta_2(z+1)^4 & = - \theta_2(z)^4, 
\qquad 
\theta_2(-1/z)^4 = -z^2 \, \theta_4(z)^4, \\
\label{eq:transrel-theta3}
\theta_3(z+1)^4 &= \theta_4(z)^4, \phantom{-}
\qquad 
\theta_3(-1/z)^4 = -z^2 \, \theta_3(z)^4, \\
\label{eq:transrel-theta4}
\theta_4(z+1)^4 &= \theta_3(z)^4, \phantom{-}
\qquad
\theta_4(-1/z)^4 = -z^2 \, \theta_2(z)^4,
\\
\label{eq:transrel-e2}
E_2(z+1) & = E_2(z),
\qquad \quad\,
E_2(-1/z) = z^2 E_2(z) - \frac{6iz}{\pi},
\\
\label{eq:transrel-e4}
E_4(z+1) & = E_4(z),
\qquad \quad\,
E_4(-1/z) = z^4 E_4(z),
\\
\label{eq:transrel-e6}
E_6(z+1) & = E_6(z),
\qquad \quad\,
E_6(-1/z) = z^6 E_6(z).
\end{align}
Using \eqref{eq:transrel-e2}--\eqref{eq:transrel-e6}, a simple calculation shows that
$$
z^2 \phi_0(-1/z) = 
 1728 \left[ \frac{(E_2E_4-E_6)^2}{E_4^3 - E_6^2 } z^2
- \frac{12i}{\pi} \cdot
 \frac{E_4 (E_2E_4-E_6)}{E_4^3 - E_6^2 }  z
- \frac{36}{\pi^2}
\left( \frac{E_4^2}{E_4^3 - E_6^2 } \right)
\right].
$$
(This is a slightly simplified version of Eq.~(29) from \cite{viazovska}.)
Similarly, with the help of \eqref{eq:transrel-theta2}--\eqref{eq:transrel-theta4} we see that
$$
z^2 \psi_I(-1/z) = 
-128 \left( \frac{\theta_3^4 + \theta_2^4}{\theta_4^8} +
\frac{\theta_2^4 - \theta_4^4}{\theta_3^8} \right).
$$
We will separate the proof of \eqref{viaz-ineq2} into two parts, proving separately that
$$ B(t) > 0 \ \ \textrm{for }t\ge1 \quad \textrm{and} \quad
t^2 B(1/t) > 0 \ \ \textrm{for }t\ge1, $$ 
that is, equivalently, that
$$
\frac{\pi^2}{36} t^2 \phi_0(i/t) < \psi_I(it)
 \ \ \textrm{for }t\ge1 \quad \textrm{and} \quad
\frac{\pi^2}{36} \phi_0(it) < t^2 \psi_I(i/t)
 \ \ \textrm{for }t\ge1.
$$
It is convenient to clear the denominators in each of these inequalities by multiplying both sides by $E_4^3-E_6^2$, which is also equal to $\frac{27}{4} (\theta_2 \theta_3 \theta_4)^8$ by a well-known identity.  \cite[p.~29]{zagier}
We therefore define
\begin{align}
\label{eq:viazineq-def-ftilde}
f(z) & = 
 \frac{1}{864}\cdot \frac{\pi^2}{36}
 (E_4^3-E_6^2) \phi_0(z) =
\frac{\pi^2}{18} (E_2 E_4 - E_6)^2, 
\\
\widetilde{f}(z) & = -\frac{1}{864}\cdot \frac{\pi^2}{36} (E_4^3-E_6^2) z^2 \phi_0(-1/z) 
\nonumber \\ & =
-\frac{\pi^2}{18} (E_2 E_4-E_6)^2 z^2
+ \frac{2\pi i}{3} E_4(E_2 E_4-E_6) z
+ 2 E_4^2,
\label{eq:viazineq-def-f}
\\
g(z) &= 
-\frac{1}{864} 
(E_4^3-E_6^2)
z^2 \psi_I(-1/z)
= \theta_2^8 ( \theta_3^{12} + \theta_2^4 \theta_3^8 + \theta_2^4 \theta_4^8 - \theta_4^{12} ).
\\
\widetilde{g}(z) &= 
\frac{1}{864} 
(E_4^3-E_6^2)
\psi_I(z)
=
\theta_4^8 ( \theta_3^{12} + \theta_4^4 \theta_3^8 + \theta_2^8 \theta_4^4 - \theta_2^{12} ),
\label{eq:viazineq-def-g}
\end{align}
By the above remarks, in order to deduce \eqref{viaz-ineq2} it will be sufficient to prove the following inequalities:
\begin{align}
\label{eq:viazovskas-ineq-ftildegtilde}
f(it) & < g(it) \qquad \textrm{for }t\ge1, \tag{V2-I}
\\
\label{eq:viazovskas-ineq-fg}
\widetilde{f}(it) &< \widetilde{g}(it) \qquad \textrm{for }t\ge1. \tag{V2-II}
\end{align}
As a final bit of preparation, recall the known explicit evaluations
\begin{align}
E_2(i) &= \frac{3}{\pi},
\qquad \qquad\ 
E_4(i) = \frac{ 3\Gamma(1/4)^8 }{64 \pi^6},
\qquad
E_6(i) = 0,
\label{eq:eisen-exp-evaluations}
\\
\theta_2(i) &= \frac{\Gamma(1/4)}{(2\pi)^{3/4}},
\qquad
\theta_3(i) = \frac{\Gamma(1/4)}{\sqrt{2}\, \pi^{3/4}},
\qquad \ \ \,
\theta_4(i) = \frac{\Gamma(1/4)}{(2\pi)^{3/4}}.
\label{eq:theta-exp-evaluations}
\end{align}
Here, $\Gamma(\cdot)$ denotes the Euler gamma function. (The numerical value of $\Gamma(1/4)$ is approximately $3.62561$. \cite{oeis-gamma14})
For the proof of \eqref{eq:theta-exp-evaluations}, see \cite[p.~325]{ramanujans-notebook5}, \cite[eq.~(2.21), p.~307]{cox}. The identity $E_2(i)= 3/\pi$ is an immediate consequence of~\eqref{eq:transrel-e2}. The relation $E_6(i)= 0$ is proved in \cite[p.~40]{apostol}, and the formula for $E_4(i)$ follows from 
\eqref{eq:theta-exp-evaluations} and the identity $E_4 = \frac12(\theta_2^8+\theta_3^8+\theta_4^8)$, proved, e.g., in \cite[p.~29]{zagier}; see also \cite[p.~290]{tsumura}.

\subsection{Proof of \eqref{eq:viazovskas-ineq-ftildegtilde}}

The functions $f(z)$, $g(z)$ have Fourier expansions
\begin{align}
f(z) &=
28800 \pi ^2 q^4 +
1036800 \pi ^2 q^6 + 
14169600 \pi ^2 q^8 +
\ldots
=: \sum_{n=4}^\infty a_n q^n,
\label{eq:viaz-ineq-FQ}
\\
g(z) & =
20480 q^3 + 
2015232 q^5 +
41656320 q^7 +
\ldots =: \sum_{n=3}^\infty b_n q^n.
\label{eq:viaz-ineq-GQ}
\end{align}
The coefficients $a_n$ in \eqref{eq:viaz-ineq-FQ} are nonnegative: this is immediate from \eqref{eq:e2e4minuse6}. Similarly, we have $b_n \ge 0$ for all $n$. To see this, 
let $\gamma(z) = \theta_2^8 \theta_3^{12} + \theta_2^{12} \theta_3^8$, and
observe that, by \eqref{eq:transrel-theta2}--\eqref{eq:transrel-theta4}, $g(z)$ can be represented as
\begin{equation}
\label{eq:Gtilde-gamma}
g(z) = \gamma(z) - \gamma(z+1).
\end{equation}
The Fourier coefficients of $\gamma$ are manifestly nonnegative, and, since the substitution $z\mapsto z+1$ corresponds to replacing each occurrence of $q$ by $-q$ in the Fourier series, the relationship \eqref{eq:Gtilde-gamma} means that the Fourier expansion of $g$ consists of twice the odd terms in the Fourier expansion of~$\gamma$, and therefore also has nonnegative coefficients.

From the above remarks it now follows that the function 
$t \mapsto e^{3\pi t} f(it) = \sum_{n=4}^\infty a_n e^{-\pi (n-3) t}
$ is a nonincreasing function of $t$. 
Using \eqref{eq:eisen-exp-evaluations}, we then get for all $t \ge1$ the bound
\begin{equation}
e^{3\pi t}f(it) \le e^{3\pi} f(i)
=
e^{3\pi} 
\frac{\pi^2}{18} \left( \frac{3}{\pi} \frac{3 \Gamma(1/4)^8}{64 \pi^6}-0 \right)^2
=
e^{3\pi} \frac{9\Gamma(1/4)^{16} }{8192\,  \pi^{12}} \approx 13130.47.
\label{eq:e3pitfit}
\end{equation}
On the other hand, by \eqref{eq:viaz-ineq-GQ} and the observation about the nonnegativity of the coefficients $b_n$, the bound
$e^{3 \pi t} g(it) = 20480 + \sum_{n=4}^\infty b_n e^{-\pi (n-3)t} \ge 20480$
holds for all $t>0$.
Combining this with \eqref{eq:e3pitfit} gives~\eqref{eq:viazovskas-ineq-ftildegtilde}. \qed

\subsection{Proof of \eqref{eq:viazovskas-ineq-fg}}

In a similar vein, we examine the $q$-series expansions of $\widetilde{f}(z)$, $\widetilde{g}(z)$ and their properties. From \eqref{eq:viazineq-def-f} and \eqref{eq:viazineq-def-g}, we obtain expansions of the forms
\begin{align}
\widetilde{f}(z) & = 
2 + (480 \pi i z + 960) q^2 + (-28800 \pi^2 z^ 2 + 123840 \pi i z + 123840) q^4 
\nonumber \\ &\quad
+ (-1036800 \pi^2 z^2 + 3150720 \pi i z + 2100480) q^6
+ \ldots
=: \sum_{n=0}^\infty c_n(z) q^n,
\label{eq:ftilde-fourier}
\\
\widetilde{g}(z) & = 2 + 240 q^2 - 10240 q^3 + 134640 q^4 - 1007616 q^5 + \ldots
=: \sum_{n=0}^\infty d_n q^n.
\label{eq:gtilde-fourier}
\end{align}
Here, \eqref{eq:gtilde-fourier} is a conventional Fourier series, whereas \eqref{eq:ftilde-fourier} is a more unusual expansion in powers of $q = e^{\pi i z}$ in which each coefficient $c_n(z)$ is itself a quadratic polynomial in $z$. It is convenient to renormalize these expressions, defining new functions
\begin{align}
\widetilde{F}(z) & = -\frac{\widetilde{f}(z)-2}{q^2} = -\sum_{n=2}^\infty c_n(z) q^{n-2}
\nonumber \\ & = (-480 \pi i z - 960) + (28800 \pi^2 z^ 2 - 123840 \pi i z - 123840) q^2 + \ldots,
\label{eq:def-kofz}
\\
\widetilde{G}(z) & = -\frac{\widetilde{g}(z)-2}{q^2} = -\sum_{n=2}^\infty d_n q^{n-2}
= -240 + 10240 q - 134640 q^2 + 1007616 q^3 + \ldots.
\label{eq:def-lofz}
\end{align}
The inequality \eqref{eq:viazovskas-ineq-fg} can now be restated as the claim that $\widetilde{G}(it) < \widetilde{F}(it)$ for all~$t\ge1$. This will follow from the combination of the following two lemmas.

\begin{lem} 
\label{lem:lit-ineq}
$\widetilde{G}(it) \le 288$ for all $t\ge1$.
\end{lem}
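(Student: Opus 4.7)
The inequality $\widetilde{G}(it) \le 288$ is equivalent to the lower bound $\widetilde{g}(it) \ge 2 - 288 q^2$ (where $q = e^{-\pi t}$). My strategy is to derive a compact closed form for $\widetilde{g}$, use a polynomial identity to decompose $2 - \widetilde{g}$ into two pieces whose cancellation is manifest, and then apply elementary bounds on $\theta_4(it)$ and $E_4(it)$ to reduce the problem to a polynomial inequality in a single real variable.

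First, I simplify $\widetilde{g}(z)$. Expanding $\theta_3^{12} = (\theta_2^4 + \theta_4^4)^3$ and $\theta_3^8 = (\theta_2^4+\theta_4^4)^2$ via Jacobi's identity in the defining expression~\eqref{eq:viazineq-def-g} reduces $\widetilde{g}$ to $2\theta_4^{20} + 5\theta_2^4\theta_3^4\theta_4^{12}$. Using the consequence $\theta_2^4\theta_3^4 = E_4 - \theta_4^8$ of the identity $E_4 = \theta_2^8 + \theta_2^4\theta_4^4 + \theta_4^8$ (which follows from $2E_4 = \theta_2^8+\theta_3^8+\theta_4^8$ together with Jacobi) then yields the closed form
\begin{equation*}
\widetilde{g}(z) = \theta_4(z)^{12}\bigl(5E_4(z) - 3\theta_4(z)^8\bigr).
\end{equation*}
Combining this with the polynomial identity $(1-a)^2(3a^3 + 6a^2 + 4a + 2) = 3a^5 - 5a^3 + 2$ (reflecting the double root of the right-hand side at $a = 1$), evaluated at $a = \theta_4^4$, gives the key decomposition
\begin{equation*}
2 - \widetilde{g}(z) = (1-\theta_4^4)^2\bigl(3\theta_4^{12} + 6\theta_4^8 + 4\theta_4^4 + 2\bigr) - 5\theta_4^{12}(E_4 - 1).
\end{equation*}

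For the estimates, I would use $E_4(it) - 1 \ge 240\, q^2$ (positivity of all $\sigma_3$-coefficients in $E_4$) and the alternating-series bound $\theta_4(it) \ge 1 - 2q$ applied to $\theta_4(it) = 1 - 2q + 2q^4 - 2q^9 + \cdots$; since $1 - 2q > 0$ for $t \ge 1$, raising to the fourth power and using $(1-2q)^4 - (1-8q) = 8q^2(3 - 4q + 2q^2) \ge 0$ gives $(1-\theta_4^4)^2 \le 64\, q^2$. Substituting into the decomposition, the target inequality reduces to the polynomial condition
\begin{equation*}
p(a) := 63a^3 - 24a^2 - 16a + 10 \ge 0, \qquad a = \theta_4(it)^4 \in (0,1),
\end{equation*}
which I verify by finding the unique critical point $a = 4/9$ of $p$ (from $p'(a) = 189a^2 - 48a - 16$), at which $p(4/9) = 298/81 > 0$ — this is the global minimum of $p$ on $[0,1]$.

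The main obstacle is the sharpness of the cancellation between the two pieces of the decomposition: a naive coefficient-wise estimate of the first piece alone already exceeds $288\, q^2$ (it yields roughly $15 \cdot 64\, q^2 = 960\, q^2$), so one cannot prove the inequality without accounting for the leading $-1200\, q^2$ contribution of $-5\theta_4^{12}(E_4-1)$. The algebraic identity above performs this cancellation symbolically, shifting the analytic burden onto the cubic nonnegativity $p(a) \ge 0$, which is checked by a single elementary calculation.
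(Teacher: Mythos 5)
Your argument is correct, and it is a genuinely different route from the paper's. The paper proves Lemma~\ref{lem:lit-ineq} by introducing $H(z) = \frac12(\widetilde{G}(z)-\widetilde{G}(z+1))$, using Lemma~\ref{lem:altsigns} to show that $t\mapsto H(it)$ is nonincreasing and that $\widetilde{G}(it)+240\le H(it)$, rewriting $H$ in terms of thetanulls, and evaluating at $t=1$ with the special values $\theta_j(i)=\Gamma(1/4)/(2\pi)^{3/4}$ etc.\ to get the numerical bound $-240+6e^{2\pi}\Gamma(1/4)^{20}/(2\pi)^{15}\approx 287.02<288$. You instead find the compact closed form $\widetilde{g}=\theta_4^{12}(5E_4-3\theta_4^8)$ (a nice simplification I verified directly: with $X=\theta_2^4$, $b=\theta_4^4$, the bracket in \eqref{eq:viazineq-def-g} collapses to $5Xb(X+b)+2b^3$, and $\theta_2^4\theta_3^4=E_4-\theta_4^8$), factor $3a^5-5a^3+2=(1-a)^2(3a^3+6a^2+4a+2)$ off the $E_4\to1$ part, and use the elementary bounds $(1-\theta_4^4)^2\le 64q^2$ and $E_4-1\ge 240q^2$ to reduce the problem to the exact cubic inequality $63a^3-24a^2-16a+10\ge 0$ on $[0,1]$, whose minimum $298/81$ at $a=4/9$ is rational and manifestly positive. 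The trade-offs: your proof avoids the special-value evaluations \eqref{eq:theta-exp-evaluations} and all numerical approximation of transcendental constants, ending in an exact rational verification, which is more elementary and arguably more robust; the paper's proof is shorter and reuses machinery (Lemma~\ref{lem:altsigns}, the explicit theta values) that it needs elsewhere anyway, but its final step $287.02<288$ requires trusting rigorous enclosures of $\Gamma(1/4)$, $e$, $\pi$. One small point worth stating explicitly in your writeup: the bound $\theta_4(it)^4\ge(1-2q)^4$ requires $1-2q\ge0$, which holds comfortably for $t\ge1$ (indeed for $t\ge(\ln 2)/\pi$), and you also need $\theta_4(it)\le1$ (the even partial-sum side of the alternating series) to guarantee $1-\theta_4^4\ge0$ before squaring.
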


\begin{lem} 
\label{lem:kit-ineq}
$\widetilde{F}(it) \ge 468$ for all $t\ge1$.
\end{lem}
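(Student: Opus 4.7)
The plan is to derive a closed-form perfect-square representation for $\widetilde{f}$ and use it to decompose $\widetilde{F}(it)$ into a main linearly-growing term in $t$ plus an exponentially small correction. Using the identity $E_2 E_4 - E_6 = \tfrac{3}{2\pi i} E_4'$ from~\eqref{eq:e2e4minuse6} inside~\eqref{eq:viazineq-def-f}, the three terms collapse to
\[
\widetilde{f}(z) = \tfrac{1}{8}\bigl(z E_4'(z) + 4 E_4(z)\bigr)^2.
\]
Substituting $z = it$ and using the same identity in reverse then yields $\widetilde{f}(it) = \tfrac{1}{2}u(t)^2$, where $u(t) := 2 E_4(it) - \tfrac{\pi t}{3}(E_2 E_4 - E_6)(it)$. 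Setting $v(t) := 2 - u(t)$ and substituting the $q$-expansions of $E_4$ and $E_2 E_4 - E_6$ gives
\[
v(t) = 240 \sum_{n=1}^\infty (\pi n t - 2)\,\sigma_3(n)\,e^{-2\pi n t},
\]
and the identity $\widetilde{F}(it) = (2 - \widetilde{f}(it)) e^{2\pi t}$ factors as
\[
\widetilde{F}(it) = 2 v(t) e^{2\pi t} - \tfrac{1}{2} v(t)^2 e^{2\pi t}.
\]

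For $t \ge 1$ every summand of $v(t)$ is positive (since $\pi n t \ge \pi > 2$), so keeping only the $n=1$ term gives the lower bound $v(t) e^{2\pi t} \ge 240(\pi t - 2)$. To bound the loss term $\tfrac{1}{2} v^2 e^{2\pi t}$ from above, I plan to show that $v(t) e^{\pi t}$ is decreasing on $[1, \infty)$, which yields $v(t)^2 e^{2\pi t} \le v(1)^2 e^{2\pi}$. Since $(v e^{\pi t})' = e^{\pi t}(v' + \pi v)$, this reduces to showing $v'(t) + \pi v(t) < 0$ for $t \ge 1$; term-by-term differentiation gives
\[
v'(t) + \pi v(t) = 240 \pi \sum_{n=1}^\infty \bigl(5n - 2 - \pi n (2n-1) t\bigr) \sigma_3(n)\, e^{-2\pi n t},
\]
and each bracket is negative when $t \ge 1$: for $n = 1$ it equals $3 - \pi t \le 3 - \pi < 0$, and for $n \ge 2$ the term $-\pi n(2n-1)t \le -6\pi$ dominates $5n - 2$. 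Combining the two bounds gives
\[
\widetilde{F}(it) \ge 480(\pi t - 2) - \tfrac{1}{2} v(1)^2 e^{2\pi}, \qquad t \ge 1,
\]
whose right-hand side is linearly increasing in $t$ and is therefore minimized at $t = 1$.

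It remains to evaluate $v(1)$ and check the resulting numerical inequality. Using $E_2(i) = 3/\pi$ and $E_6(i) = 0$ from~\eqref{eq:eisen-exp-evaluations} yields $u(1) = 2 E_4(i) - \tfrac{\pi}{3}\cdot\tfrac{3}{\pi}E_4(i) = E_4(i)$, so $v(1) = 2 - E_4(i) = 2 - 3\Gamma(1/4)^8/(64 \pi^6)$. The lemma thus reduces to the explicit numerical inequality $(2 - E_4(i))^2 e^{2\pi} \le 960\pi - 2856$, whose two sides evaluate to approximately $158.7$ and $159.9$. The main obstacle is precisely the tightness here: the margin is only about $1.2$ units (equivalently about $0.6$ in the statement $\widetilde{F}(it) \ge 468$ itself), so the approximations for $\Gamma(1/4)$, $\pi$, and $e^{2\pi}$ must be carried to roughly four or five decimal digits; still, the computation remains purely elementary and in the spirit of the paper.
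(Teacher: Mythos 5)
Your proof is correct, and it takes a genuinely different and arguably more conceptual route than the paper's. The crucial new idea is the perfect-square identity
\[
\widetilde{f}(z) = \tfrac{1}{8}\bigl(zE_4'(z) + 4E_4(z)\bigr)^2,
\]
which follows directly from substituting $E_2E_4 - E_6 = \tfrac{3}{2\pi i}E_4'$ into \eqref{eq:viazineq-def-f}; I verified the algebra and it is exact. From this you obtain the clean factorization $\widetilde{F}(it) = 2v(t)e^{2\pi t} - \tfrac12 v(t)^2 e^{2\pi t}$ with $v(t) = 240\sum_{n\ge1}(\pi n t - 2)\sigma_3(n)e^{-2\pi n t}$, all of whose summands are positive for $t\ge1$. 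The paper instead splits $\widetilde{F}$ into three ad hoc pieces $\widetilde{F}_1,\widetilde{F}_2,\widetilde{F}_3$ chosen so that certain low-order coefficients cancel, then argues separately that each piece is monotone or nonnegative; your decomposition into a dominant linear term $2ve^{2\pi t}\ge 480(\pi t - 2)$ and a controlled correction $\tfrac12 v^2 e^{2\pi t}$ (made nonincreasing via the monotonicity of $ve^{\pi t}$) replaces that three-way split with one structural observation. Both proofs end with a numerical check against explicit constants involving $\Gamma(1/4)$, $\pi$, $e^{2\pi}$; yours reduces to $(2-E_4(i))^2 e^{2\pi} \le 960\pi - 2856$, with values $\approx 158.6$ versus $\approx 159.9$, which I confirm holds with margin $\approx 1.3$ (actually a touch more comfortable than the paper's own $\approx 468.37 \ge 468$). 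Two very minor points to tighten in a write-up: (i) the phrase ``$-\pi n(2n-1)t \le -6\pi$ dominates $5n-2$'' for $n\ge2$ is loose as stated (at $n=5$, $5n-2 > 6\pi$); the correct justification is simply $\pi n(2n-1)t \ge 3\pi n > 5n > 5n-2$ for $n\ge2$, $t\ge1$; (ii) the term-by-term differentiation of $v$ should be briefly justified by uniform convergence on $[1,\infty)$, which is immediate from the exponential decay. Neither affects correctness.
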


The following auxiliary claim will be used in the proof of Lemma~\ref{lem:lit-ineq}.

\begin{lem} 
\label{lem:altsigns}
We have $(-1)^n d_n \ge 0$ for $n\ge0$.
\end{lem}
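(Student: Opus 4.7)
The plan is to reformulate the claim as the statement that $\widetilde{g}(z+1)$ has nonnegative Fourier coefficients. Indeed, since $e^{\pi i(z+1)} = -q$, the substitution $z \mapsto z+1$ sends $\sum_n d_n q^n$ to $\sum_n (-1)^n d_n q^n$, so nonnegativity of the coefficients of the latter series is precisely the assertion of the lemma.

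I would carry out three main steps. First, I would rewrite the defining expression \eqref{eq:viazineq-def-g} of $\widetilde{g}$ in a form that responds well to the transformation $z \mapsto z+1$. Using Jacobi's identity $\theta_2^4 + \theta_4^4 = \theta_3^4$ together with the difference-of-cubes factorization of $\theta_3^{12} - \theta_2^{12}$, the expression consolidates into the compact form $\widetilde{g}(z) = \theta_4^{12}(2\theta_3^8 + \theta_2^4 \theta_3^4 + 2\theta_2^8)$. Second, I would apply the modular transformations \eqref{eq:transrel-theta2}--\eqref{eq:transrel-theta4} and then use Jacobi's identity a second time to eliminate the remaining $\theta_4$ factors, obtaining
\[
\widetilde{g}(z+1) = \theta_3^{12}\bigl(2\theta_3^8 - 5\theta_2^4\theta_3^4 + 5\theta_2^8\bigr).
\]

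Third, I would switch to the variables $A = \theta_3(2z)^2$ and $B = \theta_2(2z)^2$ via the standard duplication identities $\theta_3(z)^2 = A + B$ and $\theta_2(z)^4 = 4 AB$. In these variables the expression becomes $\widetilde{g}(z+1) = 2(A+B)^6\bigl[(A+B)^4 - 10 AB (A+B)^2 + 40 A^2 B^2\bigr]$, and a direct polynomial expansion gives
\[
\widetilde{g}(z+1) = 2\bigl(A^{10} + 5A^8 B^2 + 80 A^7 B^3 + 250 A^6 B^4 + 352 A^5 B^5 + 250 A^4 B^6 + 80 A^3 B^7 + 5 A^2 B^8 + B^{10}\bigr).
\]
All coefficients of this polynomial in $A$ and $B$ are nonnegative integers, and since $A$ and $B$ are themselves $q$-series with nonnegative Fourier coefficients, the lemma follows.

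The main obstacle is identifying the correct change of variables. The intermediate factor $2\theta_3^8 - 5\theta_2^4\theta_3^4 + 5\theta_2^8$ has a $q$-expansion with alternating signs, so the nonnegativity of $\widetilde{g}(z+1)$ arises only after substantial cancellation against $\theta_3^{12}$ and is opaque in the $(\theta_2,\theta_3)$ coordinates. The duplication formulas are the key insight: they convert the expression into a bivariate polynomial in $A,B$ whose coefficient-nonnegativity is a finite algebraic check, and once that check is made the $q$-series nonnegativity of $\widetilde{g}(z+1)$ is automatic.
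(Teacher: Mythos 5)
Your proof is correct, but it takes a genuinely different route from the paper's. You both begin by reducing the claim to the nonnegativity of the Fourier coefficients of $\widetilde{g}(z+1)$, but the change of variables that makes the nonnegativity manifest is different. The paper keeps $X=\theta_2^4$, $Z=\theta_3^4$ and introduces the ad hoc linear combination $Y = 2Z-X = \theta_3^4+\theta_4^4$, observing that $X,Y,Z$ all have nonnegative $q$-coefficients and that $\widetilde{g}(z+1)=\tfrac{1}{16}\bigl(6X^5+15X^4Y+10X^3Y^2+Y^5\bigr)$, a quintic with nonnegative coefficients. You instead invoke the standard theta duplication identities to pass to $A=\theta_3(2z)^2$, $B=\theta_2(2z)^2$, arriving at a degree-$10$ polynomial with nonnegative coefficients. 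The paper's version is computationally lighter (degree $5$ rather than $10$), while yours uses a more canonical, ``off the shelf'' change of variables and therefore requires less cleverness to discover; the intermediate simplification $\widetilde{g}=\theta_4^{12}(2\theta_3^8+\theta_2^4\theta_3^4+2\theta_2^8)$ you obtain is a nice observation in its own right. One small point worth making explicit: you should note that $\theta_3(2z)=\sum_n q^{2n^2}$ and $\theta_2(2z)=\sum_n q^{2(n+1/2)^2}$ indeed have nonnegative coefficients as series in $q=e^{\pi i z}$, so the conclusion follows.
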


\begin{proof}
By \eqref{eq:transrel-theta2}--\eqref{eq:transrel-theta4}, the function $\widetilde{g}(z+1) = \sum_{n=0}^\infty (-1)^n d_n q^n$ can be written as
\begin{equation}
\label{eq:tildegzplus1}
\widetilde{g}(z+1) =
\theta_3^{12} \theta_2^8 + \theta_3^8 \theta_2^{12} + \theta_3^{12} \theta_4^8 + \theta_3^8 \theta_4^{12}.
\end{equation}
The claim is that the Fourier series of this function has nonnegative coefficients.
This fact was proved by Slipper \cite[p.~76]{slipper}, who deduced it from a certain identity representing the function on the right-hand side of \eqref{eq:tildegzplus1} in terms of the theta series of a certain 20-dimensional lattice, known as \nobreak{``DualExtremal(20,2)a''}. Here is a self-contained proof that only uses elementary properties of the thetanull functions.
Denote for convenience
$$
Z = \theta_3^4, \qquad X = \theta_2^4, \qquad Y = 2Z-X.
$$
Then $X$ and $Z$ have Fourier series with nonnegative coefficients, and,
again recalling the identity $\theta_2^4+\theta_4^4 = \theta_3^4$,
we see that $Y = \theta_3^4 + \theta_4^4 = \theta_3(z)^4 + \theta_3(z+1)^4$ (recall \eqref{eq:transrel-theta3} above), so the Fourier series of $Y$ also has nonnegative coefficients.
Now,
observe that $\widetilde{g}(z+1)$ can be expressed as
\begin{align*}
\widetilde{g}(z+1) & = 
Z^3 X^2 + Z^2 X^3 + Z^3 (Z-X)^2 + Z^2 (Z-X)^3
\\ & 
= 
\frac{1}{16}\left( 6 X^5 + 15 X^4 Y + 10 X^3 Y^2 + Y^5 \right),
\end{align*}
and therefore also has nonnegative Fourier coefficients.
\end{proof}

\begin{proof}[Proof of Lemma~\ref{lem:lit-ineq}]
Define 
\begin{equation}
\label{eq:defH}
H(z) = \frac{\widetilde{G}(z) - \widetilde{G}(z+1)}{2} = \sum_{m=1}^\infty (-d_{2m+1}) q^{2m-1}
= 10240 q + 10007616 q^3 + \ldots .
\end{equation}
Two crucial properties of $H(z)$ are: (a) the function $t\mapsto H(it) = 
\sum_{m=1}^\infty (-d_{2m+1}) e^{-\pi (2m-1)t}
$ is nonincreasing (each summand is nonincreasing, by Lemma~\ref{lem:altsigns}); and (b) $\widetilde{G}(it)+240 \le H(it)$ for all $t>0$ (this follows from Lemma~\ref{lem:altsigns} together with the observation that the constant coefficient in~\eqref{eq:def-lofz} is $-240$).
Now note that, by \eqref{eq:transrel-theta2}--\eqref{eq:transrel-theta4}, \eqref{eq:viazineq-def-g}, and \eqref{eq:def-lofz}, $H(z)$ can be expressed explicitly as
\begin{align*}
H(z) & =
- \frac12 q^{-2} 
\Big[ \theta_4^8 ( \theta_3^{12} + \theta_4^4 \theta_3^8 + \theta_2^8 \theta_4^4 - \theta_2^{12}) - 2
- \theta_3^8 ( \theta_4^{12} + \theta_3^4 \theta_4^8 + \theta_2^8 \theta_3^4 + \theta_2^{12} ) + 2
\Big]
\\ & =
\frac12 q^{-2} \left(
\theta_2^8 \theta_3^{12}
+ \theta_2^{12} \theta_3^8 + \theta_2^{12} \theta_4^8 - \theta_2^8 \theta_4^{12}
\right)
= 
\frac12 q^{-2} \left(
\theta_2^8 (\theta_3^{12}-\theta_4^{12})
+ \theta_2^{12} (\theta_3^8 + \theta_4^8)
\right).
\end{align*}
Therefore using the evaluations \eqref{eq:theta-exp-evaluations} we get that for all $t\ge1$,
\begin{align*}
\widetilde{G}(it) & 
\le 
-240 + H(it)
\le -240 + H(i)
\\ & = -240 + 
\frac12 e^{2\pi} 
\left(\frac{\Gamma(1/4)}{(2\pi)^{3/4}}\right)^{20}
\left( (2^{1/4})^{12} - 1 + (2^{1/4})^8 + 1 \right)
\\ & = 
-240 + \frac12 e^{2\pi} 
\frac{\Gamma(1/4)^{20}}{(2\pi)^{15}}(8+4)
=
-240 + 6 e^{2\pi} 
\frac{\Gamma(1/4)^{20}}{(2\pi)^{15}}
\approx 287.02,
\qquad \textrm{as claimed.} \qedhere
\end{align*}
\end{proof}

\begin{proof}[Proof of Lemma~\ref{lem:kit-ineq}]
We strategically separate $\widetilde{F}(z)$ into three components, defining
\begin{align}
\widetilde{F}_1(z) &= -480 \pi i z + (28800 \pi ^2 z^2 - 123840 \pi i z - 123840) q^2, 
\\
\widetilde{F}_2(z) & =
\frac{\pi^2}{18} q^{-2} (E_2 E_4-E_6)^2 z^2
- 2 q^{-2} (E_4^2 - 1)
+ (-28800\pi^2 z^2 + 123840) q^2,
\label{eq:K2-def}
\\
\widetilde{F}_3(z) &= 
-\frac{2\pi i}{3} q^{-2} E_4(E_2 E_4 - E_6) z + (480 \pi i z + 123840 \pi i z q^2),
\label{eq:K3-def}
\end{align}
so that, by \eqref{eq:viazineq-def-f} and \eqref{eq:def-kofz}, we have
\begin{equation}
\label{eq:KK1K2K3}
\widetilde{F}(z) = \widetilde{F}_1(z) + \widetilde{F}_2(z) + \widetilde{F}_3(z).
\end{equation}

We now make the following elementary observations:
\begin{enumerate}[(a)]

\item The function $t\mapsto \widetilde{F}_1(it)$ is monotone increasing on $[1,\infty)$, 

\smallskip

\noindent
\textit{Proof.}
Assume that $t\ge 1$. A trivial calculation gives that
\begin{align*}
\frac{d}{dt} \Big(\widetilde{F}_1(it)\Big) &
= 
480\pi e^{-2\pi t} \left( e^{2\pi t} + 120 \pi^2 t^2 - 636 \pi t + 774 \right)
\\ & 
\ge
480\pi e^{-2\pi t} \left( e^{2\pi} + 120 \pi^2 t^2 - 636 \pi t + 774 \right).
\end{align*}
The last expression is of the form $e^{-2\pi t}$ times a quadratic polynomial in~$t$, which, it is easy to check, is positive on the real line. Thus, we have shown that $\widetilde{F}_1'(t)>0$ for $t\ge1$, which proves the claim.

\medskip

\item The function $t\mapsto \widetilde{F}_2(it)$ is monotone increasing on $[1,\infty)$.

\smallskip

\noindent \textit{Proof.}
Let $(\alpha_n)_{n=2}^\infty$ and $(\beta_n)_{n=1}^\infty$ be the coefficients in the Fourier series
$$
(E_2 E_4-E_6)^2 = \sum_{n=2}^\infty \alpha_n q^{2n}, \qquad 
E_4^2-1 = \sum_{n=1}^\infty \beta_n q^{2n}.
$$
Clearly $\alpha_n \ge 0$ (see \eqref{eq:e2e4minuse6}) and $\beta_n \ge  0$ for all $n$. One can also easily check that $\alpha_2 = 518400$ and~$\beta_2 = 61920$.
Then, on inspection of \eqref{eq:K2-def}, we see that
$$
\widetilde{F}_2(it) = -2\beta_1 + \sum_{n=3}^\infty \left(-\frac{\pi^2}{18} \alpha_n t^2 - 2\beta_n\right) e^{-\pi (2n-2) t}.
$$
(The summand associated with $n=2$ is precisely cancelled out by the term $(-28800\pi^2 z^2 + 123840) q^2$ in \eqref{eq:K2-def}.)
Now for each $n\ge2$, the $n$th summand in this series is easily seen to be an increasing function of $t$ for $t\ge \frac{1}{(n-1)\pi}$, so in particular for $t\ge 1$. Thus $t\mapsto \widetilde{F}_2(it)$ is also increasing for $t\ge1$.

\medskip

\item $\widetilde{F}_3(i t)\ge 0$ for all $t>0$.

\smallskip

\noindent \textit{Proof.} 
Let $(\delta_n)_{n=1}^\infty$ be the coefficients in the Fourier series 
$
E_4(E_2 E_4-E_6) = \sum_{n=1}^\infty \delta_n q^{2n}.
$
Then $\delta_n\ge 0$ for all $n$, and
we have $\delta_1= 720$ and $\delta_2= 185760$.
Referring to \eqref{eq:K3-def}, we then see that
$$ 
\widetilde{F}_3(it) = \frac{2\pi t}{3} \sum_{n=3}^\infty \delta_n e^{-\pi (2n-2)t} \ge 0,
$$
since the summands associated with $n=1,2$ are cancelled by the term $(480 \pi i z + 123840 \pi i z q^2)$ in \eqref{eq:K3-def}.

\end{enumerate}

\bigskip
Finally, combining \eqref{eq:KK1K2K3} with the observations (a)--(c) above, we get that for $t\ge 1$,
\begin{align*}
\widetilde{F}(it) &\ge \widetilde{F}_1(i t) + \widetilde{F}_2(i t) \ge \widetilde{F}_1(i) + \widetilde{F}_2(i) 
\\ & =
480\pi + 123840 e^{-2\pi} + e^{2\pi} \Big(
-\frac{\pi^2}{18}  (E_2(i)E_4(i)-E_6(i))^2 - 2(E_4(i)^2-1)
\Big)
\\ & =
480\pi + 123840 e^{-2\pi} + e^{2\pi} \left(
2- \frac{45 \Gamma(1/4)^{16}}{8192 \pi^{12}}
\right) \approx 468.39, 
\qquad \textrm{as claimed.} \qedhere
\end{align*}
\end{proof}

\bibliographystyle{plain}
\bibliography{ineq.bib}

\end{document}